\newtheorem{theorem}{Theorem}
\newtheorem{lemma}[theorem]{Lemma}
\newtheorem{corollary}[theorem]{Corollary}
\theoremstyle{definition}
\numberwithin{theorem}{section}
\numberwithin{equation}{section}
\newcommand{\op}[1]{\operatorname{#1}}
\newcommand{\Pois}{\op{Pois}}
\newcommand{\ESF}{\op{ESF}}
\renewcommand\l{\left}
\renewcommand\r{\right}
\newcommand{\floor}[1]{{\l\lfloor#1\r\rfloor}}
\newcommand{\ceil}[1]{{\l\lceil#1\r\rceil}}
\begin{document}

\title{Random generation under the Ewens distribution}
\author{Sean Eberhard}
\address{Sean Eberhard, London, UK}
\email{eberhard.math@gmail.com}

\begin{abstract}
The Ewens sampling formula with parameter $\alpha$ is the distribution on $S_n$ which gives each $\pi\in S_n$ weight proportional to $\alpha^{C(\pi)}$, where $C(\pi)$ is the number of cycles of $\pi$. We show that, for any fixed $\alpha$, two Ewens-random permutations generate at least $A_n$ with high probability. More generally we work out how many permutations are needed for $\alpha$ growing with $n$. Roughly speaking, two are needed for $0 \leq \alpha \ll n^{1/2}$, three for $n^{1/2} \ll \alpha \ll n^{2/3}$, etc.
\end{abstract}

\maketitle

\section{Introduction}

The Ewens sampling formula with parameter $\alpha\geq 0$ is the distribution on $S_n$ which gives each $\pi \in S_n$ weight proportional to $\alpha^{C(\pi)}$, where $C(\pi)$ is the number of cycles of $\pi$. To be explicit, we say that $\pi$ has distribution $\ESF(\alpha, n)$ if
\[
  P(\pi = \sigma) = \frac{\alpha^{C(\sigma)}}{\alpha^{(n)}} \qquad(\sigma \in S_n),
\]
where 
\[
  \alpha^{(n)} = \alpha (\alpha+1) \cdots (\alpha + n-1).
\]
Thus when $\alpha = 1$ we have simply the uniform distribution, if $\alpha=0$ we have the uniform distribution on $n$-cycles, and as $\alpha\to \infty$ the distribution tends towards a point mass at the identity. In general, a larger $\alpha$ gives $\pi$ more of a tendency to have many cycles.

The Ewens sampling formula can be motivated from many different perspectives. See Crane~\cite{crane} for a survey. Ewens used it to model the frequency of alleles of a given neutral gene in a population undergoing natural selection (see \cite{ewens}). Mathematically, this is a consequence of another perspective: we can think of $\ESF(\alpha,n)$ as specifying the cycle type of a permutation according to independent Poisson random variables $Z_i \sim \Pois(\alpha / i)$ ($1\leq i \leq n$) conditional on $\sum_{i=1}^n i Z_i = n$. See Arratia, Barbour, and Tavar\'e~\cite{arratia-barbour-tavare} for much more from this perspective. Another motivation comes from the analogy with number theory: $\ESF(\alpha, n)$ is analogous to weighting integers $x$ by $\alpha^{\omega(x)}$, where $\omega(x)$ is the number of prime factors of $x$ counting multiplicity (see for example Hall and Tenenbaum~\cite{hall-tenenbaum}, particularly Sections~0.5 and 4.3).

Our own motivation is simply that $\ESF(\alpha, n)$ generalizes the uniform distribution in a simple and tractable way, and lends a fresh perspective to questions about random generation. Dixon~\cite{dixon1} proved that if $\pi_1, \pi_2 \in S_n$ are chosen uniformly at random then we have $\langle \pi_1, \pi_2 \rangle \geq A_n$ asymptotically almost surely. The purpose of the present note is to generalize this theorem to the Ewens distribution, and to see how the result depends on $\alpha$. The main assertion is the following.

\begin{theorem} \label{main-theorem}
Fix $t\geq 2$, and let $\alpha=\alpha(n)\geq 0$. Draw $\pi_1, \dots, \pi_t \sim \ESF(\alpha,n)$ independently. Then the probability that $\langle\pi_1, \dots, \pi_t\rangle \geq A_n$ is
\[
  e^{-n(\alpha/n)^t} + O\l( 1/ \log n \r).
\]
The constant implicit in the error term depends on $t$ but not on $\alpha$.
\end{theorem}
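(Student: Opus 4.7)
My plan follows the classical dichotomy due to Dixon: first I show that the \emph{only} systematic obstruction to generation is having a common fixed point, and then I quantify this obstruction.

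\emph{Upper bound and Poisson asymptotic.} Any common fixed point of $\pi_1,\dots,\pi_t$ is fixed by $\langle \pi_1,\dots,\pi_t\rangle$, which therefore cannot contain $A_n$. Letting $F$ denote the number of common fixed points, this gives $P(\langle \pi_1,\dots,\pi_t\rangle\geq A_n)\leq P(F=0)$. To compute $P(F=0)$, I use the Ewens formula to write down the factorial moments
\[
E\binom{F}{k}=\binom{n}{k}\l(\frac{\alpha^{k}\alpha^{(n-k)}}{\alpha^{(n)}}\r)^{t},
\]
the factor $\alpha^{k}\alpha^{(n-k)}/\alpha^{(n)}$ being the single-permutation probability that a prescribed $k$-set is pointwise fixed (with $\alpha^k$ from the $k$ singleton cycles and $\alpha^{(n-k)}$ the total Ewens weight on the complement). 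These moments match those of a Poisson random variable of mean $\lambda:=n\alpha^t/(\alpha+n-1)^t=n(\alpha/n)^{t}(1+O(1/n))$ up to factors $(1+O(k/n))^{t}$, so by the method of moments $P(F=0)=e^{-n(\alpha/n)^{t}}+O(1/n)$.

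\emph{Lower bound.} It remains to prove $P(F=0,\,\langle\pi_1,\dots,\pi_t\rangle\not\geq A_n)=O(1/\log n)$. When this event occurs, $\langle\pi_1,\dots,\pi_t\rangle$ lies in some maximal subgroup $M\neq A_n$ of $S_n$, which is of one of three standard types: (i) intransitive, with invariant subset of size between $2$ and $n-2$ (forced by $F=0$); (ii) transitive imprimitive; or (iii) primitive but not containing $A_n$. For (i) I sum $\binom{n}{k}(\alpha^{(k)}\alpha^{(n-k)}/\alpha^{(n)})^{t}\,p(k,\alpha,t)$ over $2\leq k\leq \lfloor n/2\rfloor$, where $p(k,\alpha,t)$ is the probability that $t$ independent $\ESF(\alpha,k)$-permutations have no common fixed point. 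The extra factor $p(k,\alpha,t)$ is precisely what the assumption $F=0$ contributes; the closed form $p(2,\alpha,t)=1-(\alpha/(\alpha+1))^{t}=O(1/\alpha)$ for large $\alpha$ (and analogous estimates for larger $k$) is enough to make the sum polynomially small throughout all regimes of $\alpha$. For (ii) I apply the explicit formula for the Ewens probability of preserving a given block system on $n/d$ blocks of size $d$ and sum over divisors $d$ of $n$. For (iii) I invoke classical bounds on primitive subgroups of $S_n$ not containing $A_n$ (Bochert, Liebeck--Saxl) combined with cycle statistics of Ewens-random permutations.

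I expect the principal obstacle to be case (iii): obtaining bounds on the probability of lying in a primitive proper subgroup uniformly in $\alpha$. Splitting into regimes (bounded $\alpha$, moderate $\alpha$, and $\alpha$ comparable to or exceeding $n$) will likely be necessary, and the $O(1/\log n)$ term in the error should come precisely from this case, as it does in Dixon's original theorem.
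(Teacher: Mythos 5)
Your architecture coincides with the paper's: common fixed points as the dominant obstruction, a quantitative Poisson approximation for their number $F$, and a first-moment bound over the remaining maximal subgroups, with the crucial extra saving $p(k,\alpha,t)=O(tk/\alpha)$ for the intransitive ones (the paper implements the same saving by counting only fixed $k$-sets on which the group acts transitively). However, two steps are genuinely gapped, both in the large-$\alpha$ regime.

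First, ``by the method of moments $P(F=0)=e^{-n(\alpha/n)^t}+O(1/n)$'' is not a proof, and the claimed rate is unobtainable. The method of moments carries no rate; quantifying it means truncating inclusion--exclusion at level $r$, with an error governed by $E\binom{F}{r}\approx E(F)^r/r!$ and by the accumulated distortion $e^{O(tr^2/n)}$ of the factorial moments. This works only when $E(F)\lesssim\log n$ (take $r\asymp\log n$). When $E(F)\gtrsim\log n$ one must fall back on a second-moment (Chebyshev) argument, which yields only $P(F=0)\le 1/E(F)+O(1/n)=O(1/\log n)$. This step --- not the primitive case --- is the true source of the $O(1/\log n)$ error in the theorem; the transitive contributions are exponentially small. (Relatedly, $n\alpha^t/(\alpha+n-1)^t=n(\alpha/n)^t(1+O(1/n))$ fails once $\alpha$ is comparable to $n$; one must note separately that both exponentials are then $O(1/n)$.)

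Second, the claim that the intransitive sum is ``polynomially small throughout all regimes of $\alpha$'' is false: for $\alpha=n$ the $k=2$ term is already of order $\binom{n}{2}4^{-t}\cdot(t/n)\asymp tn4^{-t}\to\infty$. When $\alpha$ is comparable to $n$ or larger, the permutations concentrate near the identity and lie in \emph{every} subgroup with non-negligible probability, so all three of your first-moment bounds (intransitive, imprimitive, primitive) diverge. The rescue, which you must state explicitly, is that in exactly that regime $E(F)\gg\log n$, so $P(F=0)=O(1/\log n)$ by the second-moment argument and the desired estimate holds vacuously; the union bounds are needed only when $E(F)=O(\log n)$, i.e.\ $\alpha=O(n^{1-1/t}\log^{1/t}n)$. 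For cases (ii) and (iii) the paper avoids divisor-by-divisor block-system computations and fresh cycle statistics altogether: it transfers Dixon's $n2^{-n/4}$ bound (imprimitive) and Babai's $n^{\sqrt n}/n!$ bound (primitive) from the uniform measure to $\ESF(\alpha,n)$ via the density $\alpha^{C(\pi)}n!/\alpha^{(n)}$, which is at most $n$ for $\alpha\le1$ and at most $e^{n/100}$ on the high-probability event that $\pi$ has at most $n/(100\log n)$ cycles. I recommend that route; yours would need a comparable regime split to survive large $\alpha$.
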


\begin{corollary} \label{main-corollary}
Fix $t\geq 2$. Suppose $\alpha = p n^\theta$ for constant $p, \theta \geq 0$. Draw $\pi_1, \dots, \pi_t \sim \ESF(\alpha, n)$ independently. Then
\[
  P(\langle \pi_1, \dots, \pi_t \rangle \geq A_n) \longrightarrow
  \begin{cases}
    1 & \textup{if}~\theta < 1-1/t, \\
    0 & \textup{if}~\theta > 1-1/t, \\
    e^{-p^t} & \textup{if}~\theta=1-1/t.
  \end{cases}
\]
\end{corollary}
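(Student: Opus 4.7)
The corollary is a direct consequence of Theorem~\ref{main-theorem} obtained by substituting $\alpha = pn^\theta$ into the expression $e^{-n(\alpha/n)^t}$ and analyzing the three regimes of $\theta$. The plan is therefore very short: reduce to a one-line calculation of the exponent and check three cases.

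First I would compute the exponent. With $\alpha = pn^\theta$, we have
\[
  n\l(\alpha/n\r)^t = n \cdot p^t n^{t(\theta-1)} = p^t \, n^{t\theta - (t-1)}.
\]
Hence Theorem~\ref{main-theorem} gives
\[
  P\l(\langle \pi_1,\dots,\pi_t\rangle \geq A_n\r) = \exp\l(-p^t n^{t\theta-(t-1)}\r) + O(1/\log n).
\]
The sign of the exponent $t\theta - (t-1)$ is determined by comparing $\theta$ with $1-1/t$.

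Next I would split into the three cases. If $\theta < 1-1/t$, then $t\theta-(t-1)<0$, so $n^{t\theta-(t-1)} \to 0$ and the main term tends to $e^0=1$; combined with the vanishing error $O(1/\log n)$, the probability tends to $1$. If $\theta > 1-1/t$, then $t\theta-(t-1)>0$, so $n^{t\theta-(t-1)} \to \infty$, making the main term tend to $0$; the probability tends to $0$. If $\theta = 1-1/t$, then $t\theta-(t-1)=0$, so $n^{t\theta-(t-1)}=1$ identically, and the main term equals $e^{-p^t}$; the probability tends to $e^{-p^t}$.

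There is no real obstacle here: the corollary is essentially a restatement of the main theorem in the polynomial regime for $\alpha$, and the only care required is checking that the error term $O(1/\log n)$ is indeed negligible compared to the limits in each case (which it is, since each limit is a positive constant or the bound $1$). The substance of the work is entirely contained in Theorem~\ref{main-theorem}.
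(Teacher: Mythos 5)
Your proposal is correct and is exactly the intended derivation: the paper treats the corollary as an immediate consequence of Theorem~\ref{main-theorem}, and your substitution $n(\alpha/n)^t = p^t n^{t\theta-(t-1)}$ together with the three-case sign analysis of the exponent is the whole argument.
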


In words, two permutations continue to generate with high probability for any constant $\alpha$, and even for $\alpha$ up to roughly $n^{1/2}$. For $n^{1/2} \ll \alpha \ll n^{2/3}$, three permutations are needed, for $n^{2/3} \ll \alpha \ll n^{3/4}$ four are needed, and so on. For very large $\alpha$, specifically for $\alpha \geq n^{1-o(1)}$, any bounded number of $\ESF(\alpha,n)$ permutations will fail to generate.

Recently, Brito, Fowler, Junge, and Levy~\cite{brito_fowler_junge_levy_2018} studied the Ewens distribution in the context of invariable generation, and it is interesting to compare Theorem~\ref{main-theorem} with their result. Recall that permutations $\pi_1, \dots, \pi_t \in S_n$ are said to \emph{invariably generate} if for all $g_1, \dots, g_t \in S_n$ we have
\[
  \langle \pi_1^{g_1}, \dots, \pi_t^{g_t} \rangle = S_n.
\]
It was recently proved in \cite{pemantle-peres-rivin, eberhard-ford-green-invariable} that the minimal number of permutations which invariably generate with probability bounded away from zero is exactly four. Generalizing this (as well a significant amount of relevant background, with impressive efficiency) to the Ewens distribution, Brito, Fowler, Junge, and Levy proved the following theorem (see \cite[Theorem~1]{brito_fowler_junge_levy_2018}).

\begin{theorem}[Brito--Fowler--Junge--Levy, 2018] \label{bfjl-theorem}
Let $\alpha\geq 0$ be fixed. Then the minimal number of independent $\ESF(\alpha, n)$ needed so that the probability of invariable generation is bounded away from zero is exactly
\begin{equation} \label{bfjl-formula}
  h(\alpha) = \ceil{(1 - \alpha \log 2)^{-1}},
\end{equation}
provided that $\alpha < 1/\log 2$ and that $h(\alpha)$ is not an integer. If $h(\alpha)$ is an integer then that minimal number is either $h(\alpha)$ or $h(\alpha)+1$, while if $\alpha \geq 1/\log 2$ then no bounded number of $\ESF(\alpha,n)$ permutations are enough.
\end{theorem}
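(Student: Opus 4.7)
The plan is to reduce the failure of invariable generation to the existence of a common cycle-subset sum across all $t$ permutations, and then to estimate the probability of that combinatorial event under $\ESF(\alpha,n)$ via the Poisson representation of the cycle counts.

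One first shows that, up to an $o(1)$ event, the only obstructions to invariable generation are intransitive overgroups. Extending the Luczak--Pyber argument and its invariable-generation refinements (Pemantle--Peres--Rivin \cite{pemantle-peres-rivin}, Eberhard--Ford--Green \cite{eberhard-ford-green-invariable}) from the uniform to the Ewens setting amounts to checking that the known bounds for transitive, primitive, and almost-simple maximal subgroups of $S_n$ still beat the tilted weight $\alpha^{C(\pi)}/\alpha^{(n)}$. After this reduction, the failure event becomes: there exist $m\in\{1,\ldots,n-1\}$ and subsets $I_j$ of the cycles of $\pi_j$ with $\sum_{c\in I_j}|c|=m$ for all $j$.

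The central combinatorial estimate concerns $P(m\in S_\pi)$, where $S_\pi=\{\sum_{c\in I}|c|:I\subseteq\text{cycles}(\pi)\}\subseteq\{0,\ldots,n\}$. A direct computation (Poisson model or $\ESF$ exchangeability) gives the first moment
\[
  E[N_m(\pi)]=\binom{n}{m}\frac{\alpha^{(m)}\alpha^{(n-m)}}{\alpha^{(n)}}\asymp n^{\alpha-1}
\]
for bulk $m\asymp n$, where $N_m$ counts subsets of cycles summing to $m$. This is not sharp for $P(N_m\geq 1)$, because $|S_\pi|\leq 2^{C(\pi)}=n^{\alpha\log 2+o(1)}$ (the number of cycles $C(\pi)$ concentrates near $\alpha\log n$), so on average $P(m\in S_\pi)\asymp n^{\alpha\log 2-1}$. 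A spreading and second-moment argument promotes this average to the pointwise estimate $P(m\in S_\pi)=n^{\alpha\log 2-1+o(1)}$ uniformly in the bulk range $m\in[n^\varepsilon,n-n^\varepsilon]$.

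By independence, the expected number of common bulk sums is then of order $n^{1-t(1-\alpha\log 2)}$, which tends to zero iff $t>1/(1-\alpha\log 2)$, giving the threshold $h(\alpha)$; a matching second moment handles $t<h(\alpha)$. The small-$m$ regime contributes a positive but strictly sub-$1$ probability of failure driven by short-cycle Poisson statistics, explaining why $P(\text{IG})$ lies strictly between $0$ and $1$ precisely at $t=h(\alpha)$; when $h(\alpha)$ is an integer the bulk expectation is $\Theta(1)$ and the true critical value may be either $h(\alpha)$ or $h(\alpha)+1$ depending on constants. For $\alpha\geq 1/\log 2$ the bulk expectation diverges for every fixed $t$, so invariable generation a.a.s.\ fails. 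The main obstacle is the lower-bound half of the subset-sum estimate, that is, showing $P(m\in S_\pi)\gtrsim n^{\alpha\log 2-1}$ pointwise rather than on average. Nearby events $\{m\in S_\pi\}$ are strongly correlated (a single short cycle can shift many sums simultaneously), and the natural remedy is to restrict attention to sums built from long cycles (length $\geq n^\delta$), where the conditional Poisson process is essentially independent of the total-mass constraint and a clean second moment becomes available.
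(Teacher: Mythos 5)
First, a remark on scope: the paper does not prove Theorem~\ref{bfjl-theorem} at all; it is quoted from Brito--Fowler--Junge--Levy \cite{brito_fowler_junge_levy_2018} purely as a point of comparison, so there is no internal proof to measure your sketch against. Judging it on its own terms, your architecture (reduce to intransitive obstructions, i.e.\ to a common value $m$ among the cycle subset sums $S_{\pi_1},\dots,S_{\pi_t}$, then estimate that event) is the right one, but the central quantitative claim is false and the first-moment mechanism built on it cannot work. You assert $P(m\in S_\pi)=n^{\alpha\log 2-1+o(1)}$ uniformly for bulk $m$. Already at $\alpha=1$ this contradicts the known asymptotic $i(n,m)\asymp m^{-\delta}(1+\log m)^{-3/2}$ with $\delta=1-\frac{1+\log\log 2}{\log 2}\approx 0.086$ (Eberhard--Ford--Green, building on \cite{pemantle-peres-rivin}): for $m\asymp n$ this is $n^{-0.086+o(1)}$, not $n^{\log 2-1}=n^{-0.307}$. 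Your averaging heuristic fails because $P(m\in S_\pi)$ for bulk $m$ is not governed by the typical value $|S_\pi|\approx 2^{C(\pi)}\approx n^{\alpha\log 2}$ but by the upper tail of $C(\pi)$: the cheapest way for a fixed bulk $m$ to land in $S_\pi$ is for $\pi$ to have atypically many cycles (about $\log_2 n$ of them when $\alpha=1$), and the large-deviation cost of that is $n^{-\delta}$, which is far larger than $n^{\alpha\log 2-1}$.

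Consequently the expected number of common bulk sums is $\sum_m P(m\in S_\pi)^t=n^{1-t\delta+o(1)}$ rather than $n^{1-t(1-\alpha\log 2)}$, and this diverges throughout the critical range (for $\alpha=1$, $t=4$ it is $n^{0.656+o(1)}$), so the first moment method cannot give the positive direction of the theorem; your formula for the threshold comes out right only by coincidence of heuristics. The correct mechanism --- and the reason $1-\alpha\log 2$ nonetheless appears --- is a large-deviation argument on the window counts $Y_I=|S_\pi\cap I|$ for dyadic $I=[M,2M]$: typically $Y_I\leq 2^{C_{\leq 2M}(\pi)}\approx M^{\alpha\log 2}$, and for $t$ independent copies to share a value in $I$ one essentially needs $\prod_j Y_I(\pi_j)\gtrsim M^{t-1}$, hence (by convexity of the rate function) each $Y_I(\pi_j)\gtrsim M^{1-1/t}$, which is a polynomially rare event precisely when $1-1/t>\alpha\log 2$, i.e.\ $t>(1-\alpha\log 2)^{-1}$. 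Your closing paragraph correctly identifies the strong correlation among the events $\{m\in S_\pi\}$ as the crux, but the remedy is not a sharper pointwise estimate (the pointwise estimate you are aiming for is false); it is to abandon pointwise estimates and control the distribution of $|S_\pi\cap I|$, as in \cite{pemantle-peres-rivin, eberhard-ford-green-invariable, brito_fowler_junge_levy_2018}.
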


While for ordinary generation Theorem~\ref{main-theorem} asserts that a bounded number of permutations are enough until around $\alpha = n^{1-o(1)}$, for invariable generation the required number of permutations blows up already at $\alpha = 1/\log 2$. This is a substantial contrast, though not really surprising, given that invariable generation is such a stronger property: the main obstruction to ordinary generation is the existence of a common fixed point, while the main obstruction to invariable generation is the existence of fixed subsets of a common size.

A few questions are left unanswered by Theorems~\ref{main-theorem} and \ref{bfjl-theorem}. For the latter theorem, which of $h(\alpha)$ and $h(\alpha)+1$ is correct at points of discontinuity (\cite[Question~1]{brito_fowler_junge_levy_2018})? And how many permutations (as a function of $n$) are needed for a given $\alpha \geq 1/ \log 2$ (\cite[Question~3]{brito_fowler_junge_levy_2018})? (The authors conjecture $\beta \log n$ for some $\beta = \beta(\alpha)$.) Analogously, for Theorem~\ref{main-theorem}, while the behaviour at points of discontinuity is understood, it would be interesting to know how many permutations are needed as a function of $n$ for $\alpha \geq n^{1-o(1)}$.

\subsection{Notation}

All of our nonstandard notation appears in the line
\[
  P_\alpha(\pi = \sigma) = \frac{\alpha^{C(\sigma)}}{\alpha^{(n)}} \qquad (\pi \sim \ESF(\alpha, n), \sigma \in S_n),
\]
which incidentally defines $\ESF(\alpha, n)$. We write $C(\sigma)$ for the number of cycles in $\sigma$, and we use the notation $\alpha^{(n)}$ for the ``rising factorial''
\[
  \alpha^{(n)} = \alpha (\alpha+1) \cdots (\alpha+n-1).
\]

Random permutations will be denoted $\pi$ or $\pi_1, \dots, \pi_t$, and act on the set $\Omega = \{1, \dots, n\}$, which tends to have subsets called $X$. We subscript $P$s and $E$s by $\alpha$s to indicate that the $\pi$s are taken from $\ESF(\alpha,n)$.

Throughout the paper we assume $t \geq 2$ is a fixed integer. Constants implicit in big-O notation may depend on $t$, but never on $\alpha$.

\section{Transitive subgroups}

Permutations $\pi_1, \dots, \pi_t$ fail to generate $S_n$ or $A_n$ if and only if there is a subgroup $H \notin \{S_n, A_n\}$ such that $\pi_1, \dots, \pi_t \in H$. Theorem~\ref{main-theorem} will be proved by ruling out every possible subgroup $H$. We begin with the transitive subgroups.

\begin{lemma}
Assume $\alpha \geq 1$. Let $k \geq 10 \alpha (\log n + 1)$. Then
\[
  P_\alpha(C(\pi) = k) \leq e^{-k}.
\]
\end{lemma}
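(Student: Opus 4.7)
The plan is to use the classical representation of $C(\pi)$ under $\ESF(\alpha,n)$ as a sum of independent Bernoullis (the Feller / Chinese restaurant representation, covered in Arratia--Barbour--Tavar\'e), and then apply a straightforward exponential Markov bound. Concretely, I would first recall that
\[
  C(\pi) \stackrel{d}{=} \xi_1 + \cdots + \xi_n, \qquad \xi_i \sim \op{Bernoulli}\l(\tfrac{\alpha}{\alpha+i-1}\r) \text{ independent},
\]
which one can verify directly from the weight $\alpha^{C(\sigma)}/\alpha^{(n)}$ by computing the probability generating function of $C$ and observing it factors as $\prod_i (\alpha+i-1+x\alpha)/(\alpha+i-1)$ evaluated appropriately. (Alternatively this is just Feller's coupling.)

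Next I would bound the sum of the Bernoulli parameters. For $\alpha\geq 1$,
\[
  \sum_{i=1}^n \frac{\alpha}{\alpha+i-1} \leq \alpha \sum_{i=1}^n \frac{1}{i} \leq \alpha(\log n + 1),
\]
so the mean of $C(\pi)$ is at most $\alpha(\log n+1) \leq k/10$.

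The main computation is the exponential Markov inequality. For any $\lambda>0$,
\[
  P_\alpha(C(\pi)=k) \leq P_\alpha(C(\pi)\geq k)
  \leq e^{-\lambda k} \prod_{i=1}^n \l(1+\tfrac{\alpha}{\alpha+i-1}(e^\lambda-1)\r).
\]
Using $1+x\leq e^x$ and the bound above, the product is at most $\exp\l((e^\lambda-1)\alpha(\log n+1)\r) \leq \exp\l((e^\lambda-1)k/10\r)$. Choosing $\lambda = 2$ yields an exponent of $-2k + (e^2-1)k/10 \leq -k$, giving the claim. (Any $\lambda$ with $\lambda > (e^\lambda-1)/10 + 1$ would do; the constant $10$ in the hypothesis is more than enough slack.)

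The only ``obstacle'' is choosing the hypothesis correctly, which has already been done for us: the factor $10$ in $k\geq 10\alpha(\log n+1)$ is calibrated precisely so that a fixed constant $\lambda$ (like $\lambda=2$) produces an exponent below $-k$. There is nothing subtle, and the argument is robust to replacing $10$ by any sufficiently large absolute constant.
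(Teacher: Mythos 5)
Your proof is correct, but it takes a genuinely different route from the paper. The paper argues combinatorially: it bounds the unsigned Stirling numbers via $c(n,k)/n! \leq (\log n+1)^k/k!$ (proved by induction from the recurrence $k\,c(n,k) = \sum_j \frac{n!}{j(n-j)!}c(n-j,k-1)$), uses $\alpha \geq 1$ only to say $n!/\alpha^{(n)} \leq 1$, and finishes with $k! \geq (k/e)^k$. You instead use the Feller/Chinese-restaurant representation of $C(\pi)$ as a sum of independent $\op{Bernoulli}(\alpha/(\alpha+i-1))$ variables and run a standard Chernoff bound; your numerics check out ($-2 + (e^2-1)/10 < -1$), and $\alpha \geq 1$ enters only through $\sum_i \alpha/(\alpha+i-1) \leq \alpha(\log n + 1)$. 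Your approach buys a strictly stronger conclusion --- a bound on the whole upper tail $P_\alpha(C(\pi) \geq k)$ rather than the point probability --- at the cost of importing the Bernoulli decomposition, which, while standard, the paper evidently preferred to avoid in favour of a self-contained two-line counting argument. One small slip worth fixing: the factored probability generating function should read $\prod_{i=1}^n \frac{x\alpha + i - 1}{\alpha + i - 1}$ (coming from $(x\alpha)^{(n)}/\alpha^{(n)}$ and the identity $\sum_k c(n,k)y^k = y^{(n)}$), not $\prod_i \frac{\alpha + i - 1 + x\alpha}{\alpha+i-1}$, which does not equal $1$ at $x=1$; the Bernoulli parameters you extract from it are nonetheless the right ones, so this does not affect the argument.
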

\begin{proof}
Let $c(n,k)$ be the number of $\pi\in S_n$ with exactly $k$ cycles. It easy to prove that
\begin{equation}\label{cnk-bound}
  \frac{c(n,k)}{n!} \leq \frac{(\log n + 1)^k}{k!}.
\end{equation}
For instance, by counting pairs $(c, \pi)$ such that $c$ is a cycle of $\pi$ in two different ways, we have the recurrence
\[
  k c(n,k) = \sum_{j=1}^n \frac{n!}{j (n-j)!} c(n-j, k-1),
\]
whence \eqref{cnk-bound} follows easily by induction.

Written differently, we have
\[
  P_\alpha (C(\pi)=k) \leq \frac{n!}{\alpha^{(n)}} \frac{\alpha^k (\log n+1)^k}{k!}.
\]
Since $\alpha \geq 1$ we have $n!/\alpha^{(n)} \leq 1$. Thus for $k \geq 10 \alpha \log n$ we have
\[
  P_\alpha(C(\pi)=k) \leq \frac{\alpha^k (\log n + 1)^k}{k!} \leq \left( \frac{\alpha (\log n + 1)}{k/e} \right)^k \leq e^{-k}. \qedhere
\]
\end{proof}

\begin{theorem}\label{transitive}
Assume $0\leq \alpha \leq \epsilon n / \log^2 n$, where $\epsilon = 10^{-4}$. Let $\pi_1, \pi_2 \sim \ESF(\alpha, n)$. Then the probability that $\langle \pi_1, \pi_2\rangle$ is transitive and different from $S_n$ or $A_n$ is $O(e^{-cn / \log n})$.
\end{theorem}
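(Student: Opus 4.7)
The plan is to follow Dixon's classical approach: cover the bad event $\{\langle \pi_1, \pi_2\rangle \text{ is transitive and } \neq A_n, S_n\}$ by a union over maximal transitive proper subgroups. These come in two flavours: imprimitive wreath products $H_{\mathcal{B}} = S_d \wr S_{n/d}$ stabilizing a block system $\mathcal{B}$ with block size $d$, $1 < d < n$, and primitive subgroups $H \neq A_n, S_n$. By independence each contributes $P_\alpha(\pi \in H)^2$ to the union bound, so the task reduces to computing Ewens weights of specific subgroups.

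For the imprimitive case, the key computational input is
\[
  \sum_{\sigma \in H_{\mathcal{B}}} \alpha^{C(\sigma)} = (d!)^m \beta^{(m)}, \qquad \beta = \alpha^{(d)}/d!, \quad m = n/d.
\]
I would establish this by expanding $\sigma = (\tau_1, \dots, \tau_m; \rho) \in S_d \wr S_m$ and observing that each cycle $(i_1, \dots, i_\ell)$ of $\rho$ contributes $C(\tau_{i_\ell} \cdots \tau_{i_1})$ cycles to $\sigma$: summing $\alpha^{C(\tilde\tau)}$ over $\tau_{i_1}, \dots, \tau_{i_\ell}$ gives $(d!)^{\ell - 1} \alpha^{(d)}$, and these collect over cycles of $\rho$ into $(d!)^m \beta^{(m)}$. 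This yields $P_\alpha(\pi \in H_{\mathcal{B}}) = (d!)^m \beta^{(m)}/\alpha^{(n)}$, and summing the square over the $n!/(d!^m m!)$ block systems of block size $d$ gives the closed-form bound $n! (d!)^m (\beta^{(m)})^2/(m! (\alpha^{(n)})^2)$. Stirling together with the hypothesis $\alpha \leq \epsilon n/\log^2 n$ should then bound this by $O(e^{-cn/\log n})$ uniformly in $d$, after which summing over the $n^{o(1)}$ divisors of $n$ costs nothing.

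For the primitive case I would invoke Mar\'oti's bound: every primitive $H \leq S_n$ with $H \notin \{A_n, S_n\}$ satisfies $|H| \leq n^{1 + \log_2 n}$, and there are at most $n^{O(\log n)}$ conjugacy classes of such subgroups. Combined with the crude estimate $P_\alpha(\pi \in H) \leq |H| \max(1, \alpha)^n/\alpha^{(n)}$ and the elementary bound $\max(1, \alpha)^n/\alpha^{(n)} \leq (\log n)^{-n(1 - o(1))}$ in our range of $\alpha$ (which follows from comparing $\alpha^{(n)}$ with $\prod_{\alpha \leq k < n} k$ and applying AM--GM), the total primitive contribution is at most $e^{-n \log\log n (1 - o(1))}$, comfortably negligible.

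The main obstacle, I expect, is the imprimitive estimate in the regime where $d$ is of order $n^{1/2}$: there both the count of block systems and the Ewens weight per system are near their joint maximum, and the hypothesis $\alpha \leq \epsilon n/\log^2 n$ appears to be essentially the threshold beyond which the tradeoff no longer closes with the target $e^{-cn/\log n}$ margin. The factor $\beta^{(m)}/\alpha^{(n)}$ is particularly sensitive to $\alpha$ in this regime, and any larger $\alpha$ would require a different argument.
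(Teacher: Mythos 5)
Your computational inputs are fine --- the wreath-product weight $(d!)^m\beta^{(m)}$ is exactly the formula the paper records in its last section --- but the union bound you build on them does not close in the stated range of $\alpha$, and this is a genuine gap rather than a matter of tightening Stirling estimates. For large $\alpha$ the Ewens measure concentrates on permutations with many cycles, and such permutations lie in enormously many block stabilizers, so the first moment $\sum_{\mathcal B}P_\alpha(\pi\in H_{\mathcal B})^2$ is dominated by \emph{intransitive} pairs and is not small. Concretely take $d=2$, $m=n/2$: there are $n!/(2^{n/2}(n/2)!)=n^{n/2}e^{-n/2+o(n)}$ block systems, while $P_\alpha(\pi\in H_{\mathcal B})\geq P_\alpha(\pi=\mathrm{id})=\alpha^n/\alpha^{(n)}\geq(\alpha/(2n))^n$, so your closed form is at least $\l(\alpha^2/(4n^{3/2})\r)^n e^{O(n)}$, which blows up superexponentially as soon as $\alpha\gg n^{3/4}$ --- well inside the hypothesis $\alpha\leq\epsilon n/\log^2 n$, and inside the range genuinely needed for Theorem~\ref{main-theorem} once $t\geq5$. (So the dangerous regime is small $d$, not $d\asymp n^{1/2}$ as you guessed.) The primitive case has the same disease plus a counting slip: you must sum over all conjugates of each primitive $H$, of which there can be $n!/|H|=e^{(1-o(1))n\log n}$, not merely over the $n^{O(\log n)}$ conjugacy classes; with that factor restored your bound becomes roughly $n!\,|H|\,(\alpha^n/\alpha^{(n)})^2\approx(e\alpha^2/n)^n$ per class, which already fails for $\alpha\gg n^{1/2}$.

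The paper sidesteps re-running Dixon's union bound under the Ewens measure altogether. It first shows $P_\alpha(C(\pi)\geq n/(100\log n))=O(e^{-n/(100\log n)})$ (this is where the hypothesis $\alpha\leq\epsilon n/\log^2 n$ enters), discards that event, and notes that on the complement the density of $\ESF(\alpha,n)$ against the uniform measure is at most $\alpha^{n/(100\log n)}\leq e^{n/100}$; this factor is then absorbed by the uniform-case bound $O(n2^{-n/4})$ imported from Dixon and Babai. To salvage your approach you would have to exploit the transitivity in the bad event \emph{before} taking the union bound --- for instance by first excising permutations with many cycles exactly as above --- at which point you are essentially reproducing the paper's change-of-measure argument.
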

\begin{proof}
Let $E$ be the event that $\langle \pi_1, \pi_2\rangle$ is transitive and different from $S_n$ or $A_n$. By Dixon~\cite[Lemma~2]{dixon1} and Babai~\cite[Theorem~1.4]{babai} we have
\begin{equation}\label{P1T}
  P_1(E) \leq n 2^{-n/4} + n^{\sqrt{n}}/n! = O( n 2^{-n/4} ).
\end{equation}
This bound is strong enough that we can deduce a bound for $P_\alpha(E)$ for general $\alpha$.

First suppose $\alpha \leq 1$. The density of $\ESF(\alpha,n)$ with respect to $\ESF(1, n)$ is
\[
  \frac{\alpha^{C(\pi)} n!}{\alpha^{(n)}} \leq \frac{\alpha n!}{\alpha^{(n)}}
  = \frac{n!}{(\alpha+1) \cdots (\alpha+n-1)}
  \leq n.
\]
Thus by \eqref{P1T} we have
\[
  P_\alpha(T) \leq n^2 P_1(T) = O(e^{-cn}). 
\]

Now suppose $\alpha \geq 1$. Let $B$ be the event that $\pi_1$ or $\pi_2$ has more than $n/(100 \log n)$ cycles. Let $E_1 = E \cap B$ and $E_2 = E \setminus B$. By the lemma we have
\[
  P_\alpha(E_1) \leq P_\alpha(B) = O\left( e^{-n/(100 \log n)} \right).
\]
On the other hand the density of $\ESF(\alpha, n)$ with respect to $\ESF(1,n)$ on $B^c$ is
\[
  \frac{\alpha^{C(\pi)} n!}{\alpha^{(n)}} \leq \alpha^{n/(100 \log n)} \leq n^{n/(100 \log n)} = e^{n/100}.
\]
Thus by \eqref{P1T} we have
\[
  P_\alpha(E_2) \leq e^{n/50} P_1(T) \leq e^{-cn}.
\]
This completes the proof.
\end{proof}

\section{Intransitive subgroups}

Now we turn to intransitive subgroups $H$. Every maximal intransitive subgroup of $S_n$ is isomorphic to $S_k \times S_{n-k}$ for some $k$, given as the setwise stabilizer of some subset $X$ of size $k$. Conveniently, there is a neat explicit formula for
\[
  P_\alpha(\pi \in S_k \times S_{n-k}).
\]

\begin{lemma}\label{S_k-lemma}
Let $X \subset \Omega$ be a set of size $k$. Then
\[
  P_\alpha(\pi(X) = X) =
  \frac{\alpha^{(k)} \alpha^{(n-k)}}{\alpha^{(n)}}.
\]
\end{lemma}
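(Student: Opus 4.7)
The plan is to write the event $\{\pi(X) = X\}$ as the product of two independent-looking sums and to reduce the identity to the classical generating function for unsigned Stirling numbers of the first kind.

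First I would observe that if $\pi(X) = X$ then $\pi$ restricts to a permutation $\pi_1$ of $X$ and a permutation $\pi_2$ of $\Omega \setminus X$, and conversely every such pair $(\pi_1, \pi_2) \in S_X \times S_{\Omega \setminus X}$ yields exactly one $\pi$ stabilizing $X$ setwise. The cycles of $\pi$ are just the cycles of $\pi_1$ together with the cycles of $\pi_2$, so $C(\pi) = C(\pi_1) + C(\pi_2)$. Consequently
\[
  P_\alpha(\pi(X) = X) = \frac{1}{\alpha^{(n)}} \sum_{\pi_1 \in S_X} \sum_{\pi_2 \in S_{\Omega \setminus X}} \alpha^{C(\pi_1) + C(\pi_2)} = \frac{1}{\alpha^{(n)}} \left(\sum_{\sigma \in S_k} \alpha^{C(\sigma)}\right)\left(\sum_{\tau \in S_{n-k}} \alpha^{C(\tau)}\right).
\]

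The remaining step is the classical identity $\sum_{\sigma \in S_m} \alpha^{C(\sigma)} = \alpha^{(m)}$, which follows by a short induction on $m$. For the inductive step, partition $S_m$ according to the cycle containing $m$: either $m$ is a fixed point, contributing $\alpha \cdot \alpha^{(m-1)}$, or $m$ is inserted into one of the cycles of a permutation of $\{1,\dots,m-1\}$ in one of $m-1$ positions without changing the cycle count, contributing $(m-1)\alpha^{(m-1)}$. Summing gives $(\alpha + m - 1)\alpha^{(m-1)} = \alpha^{(m)}$, matching the recurrence for the rising factorial.

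Plugging the identity into the two factors above yields $\alpha^{(k)} \alpha^{(n-k)}/\alpha^{(n)}$, which is the desired formula. There is no real obstacle here; the only thing worth remarking is that the answer depends only on $|X|$, not on $X$ itself, which is manifest from the calculation and which will presumably be used in the next section when summing over intransitive subgroups of a given block size.
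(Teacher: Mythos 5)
Your proof is correct and follows essentially the same route as the paper: restrict $\pi$ to $X$ and its complement, use $C(\pi) = C(\pi_1) + C(\pi_2)$ to factor the weighted sum, and invoke $\sum_{\sigma \in S_m} \alpha^{C(\sigma)} = \alpha^{(m)}$. The only difference is that you spell out the inductive proof of that classical identity, which the paper takes for granted (it is already implicit in the normalization $\alpha^{(n)}$ of the Ewens distribution).
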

\begin{proof}
\[
  \sum_{\pi(X) = X} \alpha^{C(\pi)} = \sum_{(\pi_1, \pi_2) \in S_k \times S_{n-k}} \alpha^{C(\pi_1) + C(\pi_2)} = \alpha^{(k)} \alpha^{(n-k)}.\qedhere
\]
\end{proof}

\begin{lemma}\label{N_k-lemma}
Let $N_k$ be the number of $k$-sets fixed simultaneously by $\pi_1, \dots, \pi_t$. We have the following estimates:
\begin{enumerate}[label=\upshape{(\alph*)}]
  \item $E_\alpha(N_k) = \binom{n}{k} \left( \frac{\alpha^{(k)} \alpha^{(n-k)}}{\alpha^{(n)}} \right)^t$; in particular $E_\alpha(N_1) = n \left( \frac{\alpha}{n+\alpha-1}\right)^t$; \label{E_alpha(N_1)}
  \item $E_\alpha(N_k)$ is monotonically decreasing in $k$ for $\alpha-1 \leq k < n/2$; \label{N_k-monotonic}
  \item $E_\alpha(N_k) \leq \frac1{k!} E_\alpha(N_1)^k \cdot e^{t k^2/\alpha}$; \label{E_alpha(N_k)-bound}
  \item $\sum_{k = 2}^\floor{n/2} E_\alpha(N_k) \leq g\l(e^t E_\alpha(N_1) \r) + O\l( n^{-2} \r)$, where $g(x) = e^x - 1 - x$, provided that $\alpha \leq n/100$. \label{sum-N_k-bound}
\end{enumerate}
\end{lemma}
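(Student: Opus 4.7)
Parts (a)--(c) are computational and should follow from standard bounds. For (a), linearity of expectation combined with independence of the $\pi_i$ and Lemma~\ref{S_k-lemma} immediately yields the formula, with the specialization at $k=1$ coming from $\alpha^{(1)}\alpha^{(n-1)}/\alpha^{(n)} = \alpha/(\alpha+n-1)$. For (b), I would compute the ratio
\[
\frac{E_\alpha(N_{k+1})}{E_\alpha(N_k)} = \frac{n-k}{k+1}\left(\frac{\alpha+k}{\alpha+n-k-1}\right)^t
\]
and observe that $(\alpha+k)/(\alpha+n-k-1) \leq 1$ whenever $k < n/2$, so it suffices to treat $t=2$. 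After multiplying out, the required inequality $(k+1)(\alpha+n-k-1)^2 \geq (n-k)(\alpha+k)^2$ factors, after the substitution $\alpha+n-k-1 = (\alpha+k)+(n-2k-1)$, as
\[
(n-2k-1)\bigl[(\alpha+k)(k-\alpha+2) + (k+1)(n-2k-1)\bigr] \geq 0,
\]
and every factor is nonnegative given $k \geq \alpha - 1$ and $k < n/2$. For (c), I would combine $\binom{n}{k} \leq n^k/k!$ with the pointwise bound $\alpha^{(k)}\alpha^{(n-k)}/\alpha^{(n)} = \prod_{i=0}^{k-1}(\alpha+i)/(\alpha+n-k+i) \leq ((\alpha+k-1)/(\alpha+n-1))^k$, valid because the individual factors are increasing in $i$; rewriting $(\alpha+k-1)/\alpha \leq e^{(k-1)/\alpha}$ and taking the $t$th power then produces the $e^{tk(k-1)/\alpha} \leq e^{tk^2/\alpha}$ factor.

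The substantive step is (d), for which my plan is to split at $k_0 = \lfloor\alpha\rfloor$. For $2 \leq k \leq k_0$ we have $k \leq \alpha$ and hence $tk^2/\alpha \leq tk$, so part (c) specializes to $E_\alpha(N_k) \leq (e^t E_\alpha(N_1))^k/k!$, and summing over $k \geq 2$ yields precisely the main term $g(e^t E_\alpha(N_1))$. For the tail $k > k_0$, part (b) gives monotonicity, so the tail is bounded by $(n/2)\,E_\alpha(N_{k_0+1})$; the task is to show this tail is $O(n^{-2})$ uniformly in $\alpha \leq n/100$. Applying (c) at $k_0+1$ together with Stirling's bound $k_0! \geq (k_0/e)^{k_0}$, and using the key estimate $E_\alpha(N_1)/\alpha \leq (\alpha/n)^{t-1} \leq 100^{-(t-1)}$ (from $\alpha \leq n/100$, together with $\alpha + n - 1 \geq n$ when $\alpha \geq 1$), I obtain a bound of the form
\[
E_\alpha(N_{k_0+1}) \leq C(t)\left(\frac{e^{t+1}}{100^{t-1}}\right)^{k_0+1},
\]
and since $e^{t+1}/100^{t-1} < 1$ for all $t \geq 2$, this is superexponentially small in $k_0$.

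The main obstacle is the small-$\alpha$ regime, where $k_0$ is bounded and the superexponential bound above does not by itself beat the extra factor of $n/2$. To handle this case I would return to the explicit formula in (a) and estimate directly: the cruder bound $q_k \leq (2(\alpha+k-1)/n)^k$ (from $\alpha+i \leq \alpha+k-1$ and $\alpha+n-k+i \geq n/2$ for $k \leq n/2$) combined with $\binom{n}{k} \leq (ne/k)^k$ yields $E_\alpha(N_k) \leq \bigl(e(2(\alpha+k-1))^t/(kn^{t-1})\bigr)^k$, which at $k=2$ is $O(n^{-2(t-1)}) = O(n^{-2})$ with a constant depending only on $\alpha$ when $\alpha$ is bounded, and which decreases geometrically in $k$ for the range of interest. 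Thus for small $\alpha$ the entire tail $\sum_{k\geq 2} E_\alpha(N_k)$ is already $O(n^{-2})$, absorbed into the error term of (d). The constants in the hypothesis $\alpha \leq n/100$ are precisely what is needed to make this combined argument uniform in $\alpha$.
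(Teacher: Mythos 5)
Parts (a)--(c) of your argument are correct. For (b) you take a more computational route than the paper, which writes the ratio as $f(k+1)/f(n-k)$ with $f(x)=(x+\alpha-1)^t/x$ and simply checks that $f$ is increasing for $x\geq\alpha-1$; your reduction to $t=2$ followed by the factorization $(n-2k-1)\bigl[(\alpha+k)(k-\alpha+2)+(k+1)(n-2k-1)\bigr]\geq 0$ is a valid alternative. Parts (a) and (c) match the paper.

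Part (d) has a genuine gap: your two cases do not cover all $\alpha\leq n/100$. In the main case you weaken $E_\alpha(N_1)/\alpha\leq(\alpha/n)^{t-1}$ to the uniform bound $100^{-(t-1)}$, so your tail estimate becomes $(n/2)C(t)\rho^{k_0+1}$ with a fixed $\rho=\rho(t)<1$; after multiplying by $n/2$ this beats $O(n^{-2})$ only when $k_0\gtrsim \log n$, i.e.\ $\alpha\gtrsim c_t\log n$. Your fallback handles only bounded $\alpha$, since its constants depend on $\alpha$ (and the paper's convention forbids that). This leaves uncovered any $\alpha$ that tends to infinity slower than $\log n$: for $\alpha=\log\log n$, say, $k_0\to\infty$ so the fallback does not apply, while $(n/2)\rho^{k_0+1}\approx n(\log n)^{-c}$ is nowhere near $n^{-2}$. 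The cure is not to discard the factor $(\alpha/n)^{t-1}$: keeping it, the tail is at most $n\bigl(e^{t+2}(\alpha/n)^{t-1}\bigr)^{\alpha-1}$ up to constants, and one then checks (as the paper does) that this expression is maximized over $10\leq\alpha\leq n/100$ at $\alpha=10$, where it is $O(n^{-8})$. A second, smaller defect: your crude bound $E_\alpha(N_k)\leq\bigl(e(2(\alpha+k-1))^t/(kn^{t-1})\bigr)^k$ does not ``decrease geometrically in $k$'' over the whole range --- at $k=n/2$ its base is about $e2^t/2^{t-1}=2e>1$, so the bound blows up --- hence even in the bounded-$\alpha$ case you must invoke the monotonicity of part (b) for $k\geq\alpha-1$ to control the large-$k$ terms, as the paper does via $\sum_{k\geq 9}E_\alpha(N_k)\leq nE_\alpha(N_9)=O(n^{1-9(t-1)})$.
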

\begin{proof}
Part \ref{E_alpha(N_1)} is clear from Lemma~\ref{S_k-lemma}. To prove \ref{N_k-monotonic}, note
\begin{align*}
  \frac{E_\alpha(N_{k+1})}{E_\alpha(N_k)}
  = \frac{n-k}{k+1}
    \l( \frac{\alpha + k}{\alpha + n-k - 1} \r)^t = \frac{f(k+1)}{f(n-k)},
\end{align*}
where
\[
  f(x) = \frac{(x + \alpha - 1)^t}x .
\]
It suffices to show that $f(x)$ is increasing for $x \geq \alpha-1$. We have
\[
  (\log f)'(x) = \frac{t}{x + \alpha - 1} - \frac1x,
\]
so in fact $f$ is increasing for $x \geq (\alpha-1)/(t-1)$.

For \ref{E_alpha(N_k)-bound}, we have
\begin{align*}
  E_\alpha(N_k)
  &= \binom{n}{k} \l( \frac{\alpha^{(k)} \alpha^{(n-k)} } {\alpha^{(n)}} \r)^t \\
  &= \binom{n}{k} \l( \frac{\alpha \cdots (\alpha+k-1)}{(\alpha + n-k) \cdots (\alpha + n-1)} \r)^t \\
  &\leq \frac{n^k}{k!} \l( \frac{\alpha + k-1}{\alpha + n-1} \r)^{kt}.
\end{align*}
Thus
\[
  \frac{E_\alpha(N_k)}{E_\alpha(N_1)^k/k!} \leq \l( 1 + \frac{k-1}{\alpha} \r)^{kt} \leq e^{tk^2 / \alpha}.
\]

Now we turn to \ref{sum-N_k-bound}. First suppose $\alpha \leq 10$. Then by \ref{E_alpha(N_1)} we have $E_\alpha(N_k) = O(n^{-(t-1)k})$, so by \ref{N_k-monotonic} we have
\begin{align*}
  \sum_{k=2}^\floor{n/2} E_\alpha(N_k) \leq \sum_{k=2}^8 E_\alpha(N_2) + n E_\alpha(N_9) = O(n^{-2(t-1)} + n^{1-9(t-1)}).
\end{align*}
Since $t \geq 2$, this is $O(n^{-2})$.

Now assume $\alpha \geq 10$. Let $k_0=\floor{\alpha}$. Note from \ref{E_alpha(N_k)-bound} that for $k \leq \alpha$ we have
\[
  E_\alpha(N_k) \leq \frac1{k!} E_\alpha(N_1)^k e^{tk^2/\alpha} \leq \frac1{k!} E_\alpha(N_1)^k e^{tk}.
\]
Thus
\[
  \sum_{k=2}^{k_0} E_\alpha(N_k) \leq g\l( e^t E_\alpha(N_1) \r).
\]
On the other hand by \ref{N_k-monotonic} we have
\begin{equation}\label{sum-from-k_0}
  \sum_{k=k_0}^\floor{n/2} E_\alpha(N_k) \leq n E_\alpha(N_{k_0}) \leq  \frac{n}{k_0!} E_\alpha(N_1)^{k_0} e^{tk_0}
  \leq n \l( e^{t+1} E_\alpha(N_1) / k_0 \r)^{k_0}.
\end{equation}
Now from \ref{E_alpha(N_1)} we have
\[
  e^{t+1} E_\alpha(N_1) / k_0 \leq e^{t+2} (\alpha / n)^{t-1},
\]
and thus \eqref{sum-from-k_0} is bounded by
\[
  n \l( e^{t+2} (\alpha / n)^{t-1} \r)^{\alpha - 1}.
\]
It is easy to see that this expression is maximized in the range $10 \leq \alpha \leq n/100$ at $\alpha=10$, where it is $O(n^{-8})$, so we're done.
\end{proof}

\begin{lemma} \label{N_k*-lemma}
Let $N_k^*$ be the number of $k$-sets fixed simultaneously by $\pi_1, \dots, \pi_t$ on which $\langle\pi_1, \dots, \pi_t\rangle$ acts transitively. Then
\begin{enumerate}[label=\upshape{(\alph*)}]
  \item $E_\alpha(N_k^*) \leq E_\alpha(N_k) \cdot tk/\alpha$ for $k > 1$; \label{E_alpha(N_k^*)-bound}
  \item $\sum_{k=2}^\floor{n/2} E_\alpha(N_k^*) = O(n^{-1/4})$, provided that $E_\alpha(N_1) \leq e^{-t-10} \log n$. \label{sum-E_alpha(N_k*)-bound}
\end{enumerate}
\end{lemma}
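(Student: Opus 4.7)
For part \ref{E_alpha(N_k^*)-bound} my plan is to reduce to a conditional probability bound on a single $k$-set and then exhibit a simple necessary condition for transitivity. By symmetry, $E_\alpha(N_k^*)/E_\alpha(N_k)$ equals, for any fixed $k$-set $X$, the conditional probability that $\langle\pi_1, \dots, \pi_t\rangle$ acts transitively on $X$ given that each $\pi_i$ fixes $X$ setwise; conditionally on this event, the restrictions $\sigma_i := \pi_i|_X$ are independently $\ESF(\alpha, k)$-distributed. I then pick any point $x_0 \in X$ and exploit the fact that transitivity on a set of size $k > 1$ forbids $x_0$ from being a group-fixed point, so some $\sigma_i$ must move $x_0$. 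The weight computation
\[
  \sum_{\sigma \in S_k,\, \sigma(x_0) = x_0} \alpha^{C(\sigma)} = \alpha \cdot \alpha^{(k-1)}
\]
gives $P(\sigma_i(x_0) = x_0) = \alpha/(\alpha+k-1)$, and a union bound over $i$ finishes, since $(k-1)/(\alpha+k-1) \leq k/\alpha$.

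For part \ref{sum-E_alpha(N_k*)-bound} I plan to combine part \ref{E_alpha(N_k^*)-bound} with Lemma~\ref{N_k-lemma}\ref{E_alpha(N_k)-bound} to get, for $k \geq 2$,
\[
  E_\alpha(N_k^*) \leq \frac{t}{\alpha(k-1)!}\, y^k e^{tk^2/\alpha}, \qquad y := E_\alpha(N_1),
\]
and then sum by splitting on the size of $\alpha$. When $\alpha \leq 10$ the crude estimate $N_k^* \leq N_k$ combined with Lemma~\ref{N_k-lemma}\ref{sum-N_k-bound} already delivers $O(n^{-2})$ (since $y = O(1/n)$ in this regime), so I may assume $\alpha \geq 10$ and split the sum at $k = \lfloor\alpha\rfloor$. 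On $2 \leq k \leq \lfloor\alpha\rfloor$ we have $e^{tk^2/\alpha} \leq e^{tk}$, and setting $w := e^t y$ the sum collapses to
\[
  \sum_{k=2}^{\lfloor\alpha\rfloor} E_\alpha(N_k^*) \leq \frac{t}{\alpha}\sum_{k=2}^{\lfloor\alpha\rfloor} \frac{w^k}{(k-1)!} \leq \frac{tw}{\alpha}(e^w - 1).
\]
Now the hypothesis $y \leq e^{-t-10}\log n$ gives $w \leq e^{-10}\log n$ (hence $e^w \leq n^{e^{-10}}$) and forces $\alpha \lesssim (n^{t-1}\log n)^{1/t}$, so $w/\alpha \leq e^t(\alpha/n)^{t-1} \lesssim (\log n/n)^{(t-1)/t} \leq (\log n/n)^{1/2}$ for $t \geq 2$. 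Multiplying, the small-$k$ contribution is $O(n^{-1/2 + e^{-10} + o(1)})$, comfortably $O(n^{-1/4})$. For $k > \alpha$ I apply monotonicity (Lemma~\ref{N_k-lemma}\ref{N_k-monotonic}) to reduce the tail to $O(n^2/\alpha) \cdot E_\alpha(N_{\lceil\alpha\rceil})$, and plug $k = \lceil\alpha\rceil$ into the same $y^k e^{tk^2/\alpha}/k!$ bound (using $k! \geq (k/e)^k$) to estimate $E_\alpha(N_{\lceil\alpha\rceil}) \leq (ew/\alpha)^\alpha e^{O(t)}$, which is super-polynomially small since $ew/\alpha \lesssim (\log n/n)^{1/2}$ and $\alpha \geq 10$.

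The main delicacy is the interplay between $e^w$ (which can grow as large as $n^{e^{-10}}$) and $w/\alpha$ (which decays at least like $n^{-1/2 + o(1)}$): the tiny constant $e^{-10}$ in the hypothesis is exactly what keeps the growth exponent of $e^w$ below the decay exponent of $w/\alpha$, so the product remains comfortably below $n^{-1/4}$. Beyond this numerical balancing, the argument is mostly bookkeeping once part \ref{E_alpha(N_k^*)-bound} is in hand.
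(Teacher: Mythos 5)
Your proof is correct and follows essentially the same route as the paper: part (a) via conditioning so that the restrictions to $X$ become independent $\ESF(\alpha,k)$ permutations and then a union bound on a marked point, and part (b) via the crude $O(n^{-2})$ bound for $\alpha\leq 10$, the split at $k_0=\floor{\alpha}$, the head sum bounded by $\frac{t}{\alpha}\,w(e^w-1)$ with $w=e^tE_\alpha(N_1)$, and the tail handled by monotonicity. The only differences are cosmetic: the paper finishes the head estimate by splitting at $\alpha=n^{1/3}$ rather than bounding $w/\alpha \lesssim (\log n/n)^{(t-1)/t}$ uniformly as you do (which is arguably cleaner), and your ``super-polynomially small'' for $(ew/\alpha)^{\alpha}$ is a slight overstatement when $\alpha$ is near $10$ (it is only about $(\log n/n)^{5}$ there), though with the $O(n^2/\alpha)$ prefactor the tail is still $O(n^{-2.9})$ and the conclusion is unaffected.
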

\begin{proof}
For \ref{E_alpha(N_k^*)-bound}, note that $E_\alpha(N_k^*) \leq E_\alpha(N_k) \cdot P_\alpha(T_k)$, where $T_k$ is the event that $\sigma_1, \dots, \sigma_t \sim \ESF(\alpha, k)$ generate a transitive group. Of course, this is the very thing we are trying to estimate in this section, but for our present purpose it suffices to crudely bound $P_\alpha(T_k)$ by the probability that at least one of $\sigma_1, \dots, \sigma_t$ moves some marked point $1 \in \{1,\dots,k\}$ (using $k > 1$). From Lemma~\ref{S_k-lemma} we have
\[
  P_\alpha(\sigma_i \cdot 1 \neq 1) = \frac{k-1}{\alpha + k - 1},
\]
so indeed
\[
  P_\alpha(T_k) \leq t \cdot \frac{k-1}{\alpha+k-1} \leq tk/\alpha.
\]
This proves \ref{E_alpha(N_k^*)-bound}.

The proof of \ref{sum-E_alpha(N_k*)-bound} is much like the proof of Lemma~\ref{N_k-lemma}\ref{sum-N_k-bound}. Again we may assume $\alpha \geq 10$. Arguing as before, we have
\[
  \sum_{k=2}^\floor{n/2} E_\alpha(N_k^*) \leq
  \sum_{k=2}^{k_0} \frac1{k!} E_\alpha(N_1)^k e^{tk} \cdot tk/\alpha + n \l( e^{t+1} E_\alpha(N_1) / k_0 \r)^{k_0},
\]
where $k_0 = \floor{\alpha}$. The latter term may be bounded exactly as before (noting that we must have $\alpha \leq n/100$ given the hypothesis about $E_\alpha(N_1)$), while the sum is bounded by
\[
  h\l( e^t E_\alpha(N_1) \r) \cdot t/\alpha,
\]
where
\[
  h(x) = \sum_{k=2}^\infty \frac{x^k}{(k-1)!} = x (e^x - 1).
\]
If $\alpha \leq n^{1/3}$ then we have
\[
  E_\alpha(N_1) \leq n(\alpha/n)^2 \leq n^{-1/3},
\]
and thus
\[
  h\l(e^t E_\alpha(N_1)\r) \cdot t/\alpha \leq O\l( n^{-2/3} \r).
\]
On the other hand if $\alpha \geq n^{1/3}$ and $e^t E_\alpha(N_1) \leq \epsilon \log n$ then we have
\[
  h\l(e^t E_\alpha(N_1)\r) \cdot t/\alpha \leq n^{-1/3 + \epsilon + o(1)}. 
\]
Thus either way we have the bound we need.
\end{proof}

The point of the previous two lemmas is that the probability that $\langle \pi_1, \dots, \pi_t \rangle$ is transitive is controlled by the probability that $N_1=0$. It therefore remains only to understand the behaviour of $N_1$.

Analogously to Lemma~\ref{S_k-lemma} and Lemma~\ref{N_k-lemma}\ref{E_alpha(N_1)}, we have
\[
  E_\alpha\l( \binom{N_1}{k} \r) = \binom{n}{k} \l( \frac{\alpha^k \alpha^{(n-k)}}{\alpha^{(n)}} \r)^t.
\]
If $n (\alpha/n)^t \to x$ then this converges to $x^k/k!$. It follows by the method of moments that $N_1$ converges in distribution to $\Pois(x)$. We need a version of this argument with an explicit error term. The following special case suffices.

\begin{lemma} \label{bonferroni}
The following estimate holds:
\[
  P_\alpha(N_1 = 0) = e^{-E_\alpha(N_1)} + O(1/\log n).
\]
\end{lemma}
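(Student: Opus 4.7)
The plan is to apply a quantitative method of moments. Write $\lambda := E_\alpha(N_1)$ and factor the $k$-th factorial moment as
\[
  E_\alpha\binom{N_1}{k} = \binom{n}{k}\l(\frac{\alpha^k \alpha^{(n-k)}}{\alpha^{(n)}}\r)^t = \frac{\lambda^k}{k!} R_k,
\]
where
\[
  R_k := \prod_{j=0}^{k-1}(1 - j/n)(1 - j/(\alpha+n-1))^{-t}.
\]
A Taylor expansion of each factor's logarithm yields $\log R_k = O(tk^2/n)$, and hence $R_k = 1 + O(tk^2/n)$, uniformly for $k \leq C\log n$ with $C$ any fixed constant. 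I then split into two regimes according to the size of $\lambda$.

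When $\lambda \geq (\log n)/2$, both $e^{-\lambda} \leq n^{-1/2}$ and $P_\alpha(N_1 = 0)$ are small. Indeed $\op{Var}(N_1) = 2 E_\alpha\binom{N_1}{2} + \lambda - \lambda^2 = (R_2 - 1)\lambda^2 + \lambda$, and since $|R_2 - 1| = O(t/n)$, Chebyshev's inequality yields
\[
  P_\alpha(N_1 = 0) \leq \op{Var}(N_1)/\lambda^2 = 1/\lambda + O(t/n) = O(1/\log n),
\]
which matches $e^{-\lambda}$ up to $O(1/\log n)$.

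When $\lambda \leq (\log n)/2$, I truncate Bonferroni inclusion-exclusion at $K := \ceil{e^2 \log n}$:
\[
  \l|P_\alpha(N_1 = 0) - \sum_{k=0}^K (-1)^k E_\alpha\binom{N_1}{k}\r| \leq E_\alpha\binom{N_1}{K+1},
\]
and compare to the corresponding truncation of $e^{-\lambda} = \sum_{k \geq 0}(-\lambda)^k/k!$, whose tail is bounded by $\lambda^{K+1}/(K+1)!$. With this choice of $K$ one has $e\lambda/K \leq 1/(2e)$, so both tails are $(2e)^{-\Theta(\log n)}$ (for the Bonferroni tail I also use $R_{K+1} = 1 + o(1)$). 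The difference between the truncated sums is controlled termwise by
\[
  \sum_{k=0}^K |R_k - 1| \lambda^k/k! \leq \frac{Ct}{n} \sum_{k=0}^K k^2 \lambda^k/k! \leq \frac{Ct}{n}(\lambda^2 + \lambda)e^\lambda,
\]
and since $\lambda \leq (\log n)/2$ implies $e^\lambda \leq n^{1/2}$, the estimate is $O(t(\log n)^2/\sqrt n) = O(1/\log n)$.

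The main bookkeeping is the uniform estimate $R_k = 1 + O(tk^2/n)$ for $k$ up to $\Theta(\log n)$, which follows directly from $\alpha + n - 1 \geq n - 1$ and Taylor expansion; the only delicate point is the tradeoff in the second regime between making $K$ large enough to kill the truncation tails and keeping the termwise error from $R_k - 1$ manageable, and the threshold $\lambda \leq (\log n)/2$ is chosen precisely so that $e^\lambda$ is small enough compared to $n/(\log n)^2$.
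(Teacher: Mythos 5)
Your proof is correct and follows essentially the same route as the paper: the same factorial-moment computation with multiplicative error $1+O(tk^2/n)$, the same split at $E_\alpha(N_1)\approx\tfrac12\log n$, Bonferroni truncated at $\Theta(\log n)$ terms in the small-mean regime, and Chebyshev's inequality in the large-mean regime. The only differences are cosmetic (your $K=\lceil e^2\log n\rceil$ versus the paper's $r\sim 100\log n$, and your slightly more explicit termwise bookkeeping of the $R_k-1$ errors).
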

\begin{proof}
Note that
\begin{align}
\frac{\alpha^k \alpha^{(n-k)}}{\alpha^{(n)}}
  &= \l( \frac{\alpha}{n + \alpha - 1} \r)^k
\l(
  \l(1 - \frac1{n + \alpha - 1} \r) \dots \l( 1 - \frac{k-1}{n + \alpha - 1} \r)
\r)^{-1} \nonumber \\
  &= \l( \frac{\alpha}{n+\alpha -1} \r)^k e^ { O\l( k^2/n \r) }. \label{alpha-technical-calc}
\end{align}
Thus
\begin{align}
  E_\alpha\l( \binom{N_1}{k} \r)
  &= \binom{n}{k} \l( \frac{\alpha^k \alpha^{(n-k)}}{\alpha^{(n)}} \r)^t \nonumber \\
  &= \binom{n}{k} \l( \frac{\alpha}{n+\alpha - 1} \r)^{kt} e^{O(tk^2/n)} \nonumber \\
  &= \frac{E_\alpha(N_1)^k}{k!} e^{O(tk^2/n)} \label{E_alpha-binom-N_1}
\end{align}
(the last equalitiy using also the $\alpha=1$ case of \eqref{alpha-technical-calc}). 
Thus it follows from Bonferroni's inequalities (a.k.a., inclusion--exclusion) that, for any $r$,
\begin{align*}
  P_\alpha(N_1 = 0)
  &= \sum_{k=0}^{r-1} (-1)^k E_\alpha\l( \binom{N_1}{k} \r) + O\l( E_\alpha\l( \binom{N_1}{r} \r) \r) \\
  &= \sum_{k=0}^{r-1} (-1)^k \frac{E_\alpha(N_1)^k}{k!} e^{t k^2 /n} + O\l( \frac{E_\alpha(N_1)^r }{r!} e^{O(t r^2 /n)} \r) \\
  &= e^{-E_\alpha(N_1)} + O\l( e^{E_\alpha(N_1)} (e^{t r^2/n} - 1) + \frac{E_\alpha(N_1)^r}{r!} e^{O(t r^2/n)}  \r).
\end{align*}

Assume first $E_\alpha(N_1) \leq \frac{1}{2} \log n$. Then we may take $r \sim 100 \log n$ and have
\[
\frac{E_\alpha(N_1)^r}{r!} \leq (e E_\alpha(N_1)/r)^r \leq n^{-c},
\]
while also
\[
  e^{E_\alpha(N_1)} (e^{t r^2/n} - 1) \leq n^{1/2} \cdot O(t r^2/n) = O\l( n^{-c} \r).
\]
Thus we have
\[
  P_\alpha(N_1=0) = e^{-E_\alpha(N_1)} + O(n^{-c})
\]
in this case.

Now assume $E_\alpha(N_1) \geq \frac{1}{2} \log n$. In this case the above method is ineffective, but we can use the second moment method instead. From \eqref{E_alpha-binom-N_1} we have
\[
  E_\alpha(N_1^2) = E_\alpha(N_1) + 2 E_\alpha\l( \binom{N_1}{2} \r) = E_\alpha(N_1) + E_\alpha(N_1)^2 e^{O(t/n)}.
\]
Thus by Chebyshev's inequality we have
\[
  P_\alpha(N_1 = 0) \leq \frac{\op{Var}_\alpha(N_1)}{E_\alpha(N_1)^2} = \frac1{E_\alpha(N_1)} + e^{O(t/n)} - 1 = \frac1{E_\alpha(N_1)} + O(n^{-1}).
\]
Thus
\[
  P_\alpha(N_1 = 0) = O(1/\log n),
\]
as required.
\end{proof}

Finally we are ready to estimate the probability that $\langle \pi_1, \dots, \pi_t\rangle$ is transitive.

\begin{theorem}\label{intransitive}
The probability that $\langle \pi_1, \dots, \pi_t\rangle$ is transitive is
\[
  e^{ -n (\alpha / n)^t} + O(1/ \log n).
\]
\end{theorem}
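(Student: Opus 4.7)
The plan is to reduce the statement to Lemma~\ref{bonferroni}. The key observation is that $\langle \pi_1,\dots,\pi_t\rangle$ is intransitive if and only if it has an orbit of size $k$ for some $1\leq k \leq \lfloor n/2 \rfloor$ (orbits come in complementary pairs, so the smaller always has size at most $n/2$). Writing this event in terms of the counts from Lemma~\ref{N_k*-lemma},
\[
  P_\alpha(\textup{transitive}) = P_\alpha\l(N_1 + N_2^* + \cdots + N_{\lfloor n/2\rfloor}^* = 0\r),
\]
and sandwiching between the trivial upper bound and the union-bound lower bound,
\[
  P_\alpha(N_1 = 0) - \sum_{k=2}^{\lfloor n/2\rfloor} E_\alpha(N_k^*) \leq P_\alpha(\textup{transitive}) \leq P_\alpha(N_1 = 0),
\]
the problem reduces to estimating $P_\alpha(N_1 = 0)$ and controlling the correction.

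I would then split on whether $E_\alpha(N_1) \leq e^{-t-10}\log n$, matching the hypothesis of Lemma~\ref{N_k*-lemma}\ref{sum-E_alpha(N_k*)-bound}. In that regime the correction sum is $O(n^{-1/4})$, and Lemma~\ref{bonferroni} gives $P_\alpha(N_1 = 0) = e^{-E_\alpha(N_1)} + O(1/\log n)$; combining yields $P_\alpha(\textup{transitive}) = e^{-E_\alpha(N_1)} + O(1/\log n)$. In the complementary regime $E_\alpha(N_1) > e^{-t-10}\log n$, Lemma~\ref{bonferroni} itself forces $P_\alpha(N_1 = 0) = O(1/\log n)$, so both $P_\alpha(\textup{transitive})$ and $e^{-E_\alpha(N_1)}$ are $O(1/\log n)$ and the identity holds trivially.

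The remaining cosmetic step is to replace $E_\alpha(N_1) = n(\alpha/(n+\alpha-1))^t$ by $n(\alpha/n)^t$ inside the exponential. If $n(\alpha/n)^t \geq \log\log n$ then both exponentials are already $O(1/\log n)$, using $E_\alpha(N_1) \leq n(\alpha/n)^t$ for $\alpha\geq 1$ and noting that $\alpha<1$ forces $n(\alpha/n)^t \leq 1/n^{t-1}$ and hence precludes this regime. Otherwise a short Taylor expansion of $(1 - (\alpha-1)/(n+\alpha-1))^t$ gives $|n(\alpha/n)^t - E_\alpha(N_1)| = O(n^{-1/t}(\log\log n)^{1+1/t}) = o(1/\log n)$, and since $|e^{-x}-e^{-y}|\leq |x-y|$ this transfers to the exponentials. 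The only genuinely delicate point is lining up the case-split threshold with the hypotheses of both Lemma~\ref{bonferroni} and Lemma~\ref{N_k*-lemma}\ref{sum-E_alpha(N_k*)-bound}; once the threshold is set near $E_\alpha(N_1) \asymp \log n$, the pieces slot together without further work.
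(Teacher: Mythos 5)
Your reduction and your main estimate follow the paper's own proof essentially verbatim: identifying intransitivity with $N_1+\sum_{k\geq 2}N_k^*>0$, the sandwich/union bound via Lemma~\ref{N_k*-lemma}, the case split at $E_\alpha(N_1)=e^{-t-10}\log n$, and the appeal to Lemma~\ref{bonferroni} are exactly the paper's argument, and that part of your write-up is correct, giving $P_\alpha(\textup{transitive})=e^{-E_\alpha(N_1)}+O(1/\log n)$ in both regimes.

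The gap is in your final replacement step. You put the threshold on $n(\alpha/n)^t$ and claim that $n(\alpha/n)^t\geq\log\log n$ makes \emph{both} exponentials $O(1/\log n)$, ``using $E_\alpha(N_1)\leq n(\alpha/n)^t$''. That inequality points the wrong way: it gives $e^{-n(\alpha/n)^t}\leq e^{-E_\alpha(N_1)}$, so a lower bound on $n(\alpha/n)^t$ yields no upper bound on $e^{-E_\alpha(N_1)}$ at all. What cheap comparisons do give (for $1\leq\alpha\leq n$) is only $E_\alpha(N_1)\geq 2^{-t}n(\alpha/n)^t$, which at your threshold produces $e^{-E_\alpha(N_1)}\leq(\log n)^{-2^{-t}}$, not $O(1/\log n)$; to rescue the claim you would have to argue in addition that in the borderline range one has $\alpha=o(n)$, hence the two exponents differ by $o(1)$ --- i.e.\ rerun your Case B computation inside Case A. The clean fix, which is what the paper does, is to threshold on the \emph{smaller} quantity $E_\alpha(N_1)$: if $E_\alpha(N_1)\geq\log n$ then also $n(\alpha/n)^t\geq\log n$ (for $\alpha<1$ this regime is vacuous, as you note), so both exponentials are $O(1/n)$; while if $E_\alpha(N_1)\leq\log n$ then $\alpha=O(n^{1-1/t}\log^t n)$, the exponents differ by $O(n^{-1/t+o(1)})$, and $|e^{-x}-e^{-y}|\leq|x-y|$ finishes, exactly as in your Taylor-expansion step. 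With that one change your proof is complete and coincides with the paper's.
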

\begin{proof}
Let $T_n$ be the event that $\langle \pi_1, \dots, \pi_t\rangle$ is transitive. Then $T_n^c$ is the event that there is some subset $X \subset \Omega$ of size at most $n/2$ simultaneously fixed by $\pi_1, \dots, \pi_t$. Moreover if we take a minimal such $X$ then $\pi_1, \dots, \pi_t$ will act transitively on $X$. Thus $T_n^c$ coincides with the event $\bigcup_{k=1}^\floor{n/2} \{ N_k^* > 0\}$. By Lemma~\ref{N_k*-lemma} we have
\[
  P_\alpha\l( \bigcup_{k=2}^\floor{n/2} \{ N_k^* > 0 \} \r)
  \leq \sum_{k=2}^\floor{n/2} E_\alpha(N_k^*) = O(n^{-c}),
\]
provided $E_\alpha(N_1) \leq e^{-t-10} \log n$, while by Lemma~\ref{bonferroni} we have
\[
  P_\alpha(N_1^* > 0) = 1 - e^{-E_\alpha(N_1)} + O\l( 1/\log n \r)
\]
(noting $N_1 = N_1^*$). If $E_\alpha(N_1) \geq e^{-t-10} \log n$ then we can just bound
\[
  P_\alpha(T_n^c) \geq P_\alpha(N_1^* > 0) = 1 - O(n^{-c}) - O(1/\log n).
\]
We thus have in either case
\[
  P_\alpha(T_n^c) = 1 - e^{-E_\alpha(N_1)} + O\l( 1/\log n \r).
\]

Finally, we claim that at this level of approximation we can replace $E_\alpha(N_1)$ by simply $n (\alpha / n)^t$. By Lemma~\ref{N_k-lemma} we have
\[
  E_\alpha(N_1) = n \l( \frac{\alpha}{n+\alpha-1} \r)^t = n (\alpha / n)^t e^{O(t \alpha / n)}.
\]
If $E_\alpha(N_1) \geq \log n$ then both $e^{-E_\alpha(N_1)}$ and $e^{-n (\alpha / n)^t}$ are $O(1/n)$. If $E_\alpha(N_1) \leq \log n$ then $\alpha \leq O\l( n^{1-1/t} \log^t n\r)$, so the error in the approximation $e^{-E_\alpha(N_1)} \approx e^{-n (\alpha / n)^t}$ is $O(n^{-1/t + o(1)})$. This finishes the proof.
\end{proof}

Theorem~\ref{main-theorem} is immediate from Theorems~\ref{transitive} and \ref{intransitive}, at least if $\alpha \leq \epsilon n / \log^2 n$. If $\alpha \geq \epsilon n / \log^2 n$, Theorem~\ref{intransitive} implies that $\langle \pi_1, \dots, \pi_t \rangle$ is intransitive with probability $1 - O(1/\log n)$, and also $e^{-n(\alpha/n)^t} = O(1/\log n)$, so we don't need Theorem~\ref{transitive} in this case. Thus Theorem~\ref{main-theorem} is proved.

\section{The \texorpdfstring{$\alpha$}{alpha}-density of some other subgroups}

Essential to our calculation was the observation (Lemma~\ref{S_k-lemma}) that
\[
  P_\alpha(S_k \times S_{n-k}) = \frac{\alpha^{(k)} \alpha^{(n-k)}}{\alpha^{(n)}}.
\]
It is a little surprising that the $\alpha$-density of $S_k \times S_{n-k}$ has such a convenient formula. It turns out that there are ``$\alpha$-analogues'' of densities of at least a couple other standard subgroups of $S_n$ too.

Let $r$ be a divisor of $n$, and let
\[
  \Omega = \Omega_1 \cup \cdots \cup \Omega_r
\]
be a partition of $\Omega$ into $r$ sets $\Omega_i$ each of size $n/r$. The group of $\pi\in S_n$ preserving $\{\Omega_1, \dots, \Omega_r\}$ (possibly permuting them) is isomorphic to the wreath product
\[
  S_{n/r} \wr S_r.
\]
When the particular partition is understood or unimportant, we denote this subgroup simply $S_{n/r} \wr S_r$. This action of $S_{n/r} \wr S_r$ is called the \emph{imprimitive action} of the wreath product, and the groups $S_{n/r} \wr S_r$ are precisely the maximal imprimitive subgroups of $S_n$. Note that the index of $S_{n/r} \wr S_r$ in $S_n$ is
\[
  \frac{n!}{(n/r)!^r r!}.
\]

\begin{lemma}
We have
\[
  P_\alpha(S_{n/r} \wr S_r) = \frac{(n/r)!^r}{\alpha^{(n)}} \left( \frac{\alpha^{(n/r)}}{(n/r)!} \right)^{(r)}.
\]
\end{lemma}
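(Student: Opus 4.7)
The plan is to follow the same template as Lemma~\ref{S_k-lemma}: compute
\[
  P_\alpha(S_{n/r} \wr S_r) = \frac{1}{\alpha^{(n)}} \sum_{\pi \in S_{n/r} \wr S_r} \alpha^{C(\pi)}
\]
by parametrising elements of the wreath product and tracking the cycle count. Write $m = n/r$ and parametrise $\pi \in S_m \wr S_r$ as $(\sigma_1, \dots, \sigma_r; \tau)$ with $\sigma_i \in S_m$ and $\tau \in S_r$, acting by first applying $\sigma_i$ within $\Omega_i$ and then permuting the blocks by $\tau$.

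The key observation is the cycle structure of such a $\pi$. If $\tau$ contains a cycle $(i_1 \, i_2 \, \cdots \, i_k)$, then on $\Omega_{i_1} \cup \cdots \cup \Omega_{i_k}$ the element $\pi$ has cycles in one-to-one correspondence with the cycles of the ``product around the cycle'' $\rho := \sigma_{i_k} \sigma_{i_{k-1}} \cdots \sigma_{i_1} \in S_m$ (each cycle of $\rho$ of length $\ell$ lifting to a single cycle of $\pi$ of length $k\ell$). Hence $C(\pi)$ equals the sum of $C(\rho)$ over the cycles of $\tau$.

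Now comes the trick that makes the sum telescope: for fixed $\sigma_{i_2}, \dots, \sigma_{i_k}$, as $\sigma_{i_1}$ ranges over $S_m$ the product $\rho$ also ranges uniformly over $S_m$. Therefore
\[
  \sum_{\sigma_{i_1}, \dots, \sigma_{i_k} \in S_m} \alpha^{C(\rho)} = (m!)^{k-1} \sum_{\rho \in S_m} \alpha^{C(\rho)} = (m!)^{k-1}\, \alpha^{(m)}.
\]
Applying this independently to each cycle of $\tau$ yields, for any $\tau \in S_r$ with $C(\tau)$ cycles,
\[
  \sum_{\sigma_1, \dots, \sigma_r \in S_m} \alpha^{C(\pi)} = (m!)^{r - C(\tau)} \bigl( \alpha^{(m)} \bigr)^{C(\tau)}.
\]

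Finally I would sum over $\tau \in S_r$ using the classical generating-function identity $\sum_{\tau \in S_r} \beta^{C(\tau)} = \beta^{(r)}$ (the defining identity of the unsigned Stirling numbers of the first kind), applied with $\beta = \alpha^{(m)}/m!$, to get
\[
  \sum_{\pi \in S_m \wr S_r} \alpha^{C(\pi)} = (m!)^r \sum_{\tau \in S_r} \bigl( \alpha^{(m)}/m! \bigr)^{C(\tau)} = (m!)^r \bigl( \alpha^{(m)}/m! \bigr)^{(r)},
\]
and dividing by $\alpha^{(n)}$ gives the stated formula. The only nontrivial step is the cycle-structure analysis of a wreath-product element and the uniformisation trick for $\rho$; the rest is just bookkeeping.
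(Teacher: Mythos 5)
Your proof is correct and follows essentially the same route as the paper: decompose by the induced block permutation, observe that on each block-cycle the cycles of $\pi$ correspond to those of the product of the block maps around that cycle, sum one factor at a time to get $(m!)^{k-1}\alpha^{(m)}$ per cycle, and finish with $\sum_{\tau\in S_r}\beta^{C(\tau)}=\beta^{(r)}$. The only cosmetic difference is that you use standard wreath-product coordinates $(\sigma_1,\dots,\sigma_r;\tau)$ (which tacitly fixes identifications of the blocks), whereas the paper works directly with the restrictions $\pi_i\colon\Omega_i\to\Omega_{\sigma(i)}$.
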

\begin{proof}
There are $r!$ ways that a permutation $\pi \in S_n$ can preserve $\{\Omega_1, \dots, \Omega_r\}$: for each $\sigma \in S_r$ we might have
\[
  \pi(\Omega_i) = \Omega_{\sigma(i)} \qquad (i\in \{1, \dots, r\}).
\]
Fix $\sigma$. For each $i$ let $\pi_i = \pi|_{\Omega_i}$, so that
\[
  \pi_i: \Omega_i \to \Omega_{\sigma(i)}.
\]
Clearly choosing $\pi$ is the same as choosing $\pi_1, \dots, \pi_r$.

Let $i, \sigma(i), \dots, \sigma^{s-1}(i)$ be a cycle of $\sigma$, and let
\[
  \Psi = \Omega_i \cup \Omega_{\sigma(i)} \cup \dots \cup \Omega_{\sigma^{s-1}(i)}.
\]
Then the number of cycles of $\pi|_\Psi$ is the same as the number of cycles of the permutation of $\Omega_i$ defined by
\[
  \def\boxwidth{30pt}
  \Omega_i \xrightarrow{\mathmakebox[\boxwidth]{\pi_i}} \Omega_{\sigma(i)}
  \xrightarrow{\mathmakebox[\boxwidth]{\pi_{\sigma(i)}}} \cdots
  \xrightarrow{\mathmakebox[\boxwidth]{\pi_{\sigma^{s-2}(i)}}} \Omega_{\sigma^{s-1}(i)}
  \xrightarrow{\mathmakebox[\boxwidth]{\pi_{\sigma^{s-1}(i)}}} \Omega_i.
\]
In other words we have
\[
  C(\pi|_\Psi) = C(\pi_{\sigma^{s-1}(i)} \circ \cdots \circ \pi_i).
\]
Thus the $\alpha$-weighted count of $\pi|_\Psi$ inducing the given cycle is
\begin{align*}
  \sum_{\pi_i, \dots, \pi_{\sigma^{s-1}(i)}} \alpha^{C(\pi_{\sigma^{s-1}(i)} \circ \cdots \circ \pi_i)}
  &= \sum_{\pi_i, \dots, \pi_{\sigma^{s-2}(i)}} \sum_{\tau \in S_{n/r}} \alpha^{C(\tau)} \\
  &= (n/r)!^{s-1} \alpha^{(n/r)}.
\end{align*}

It follows that the $\alpha$-weighted count of all $\pi \in S_{n/r} \wr S_r$ inducing $\sigma$ is
\[
  (n/r)!^r \left( \frac{\alpha^{(n/r)}}{(n/r)!} \right)^{C(\sigma)}.
\]
Thus
\[
  P_\alpha \l( S_{n/r} \wr S_r \r)
  = \frac{(n/r)!^r}{\alpha^{(n)}} \sum_{\sigma \in S_r} \l( \frac{\alpha^{(n/r)}}{(n/r)!} \r)^{C(\sigma)}
  = \frac{(n/r)!^r}{\alpha^{(n)}} \l( \frac{\alpha^{(n/r)}}{(n/r)!} \r)^{(r)}. \qedhere
\]
\end{proof}

Here is another cute formula (cf.~\cite[Lemma~19]{brito_fowler_junge_levy_2018}):

\begin{lemma}
We have
\[
  P_\alpha(A_n) = \frac{1}{2} + \frac{1}{2} \frac{\alpha_{(n)}}{\alpha^{(n)}},
\]
where
\[
  \alpha_{(n)} = \alpha(\alpha-1) \cdots (\alpha - n + 1).
\]
\end{lemma}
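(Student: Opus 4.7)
The approach is a one-line generating-function trick built on the identity
\[
  \sum_{\pi \in S_n} \alpha^{C(\pi)} = \alpha^{(n)},
\]
which is the defining normalization of $\ESF(\alpha,n)$. The plan is to split $P_\alpha(A_n)$ using the indicator $\mathbb{1}[\pi \in A_n] = \tfrac{1}{2}(1 + \op{sgn}(\pi))$, and then exploit the fact that the sign of a permutation is $\op{sgn}(\pi) = (-1)^{n - C(\pi)}$, so that weighting by the sign amounts to replacing $\alpha$ by $-\alpha$ (up to an overall factor of $(-1)^n$).

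Concretely, I would write
\[
  \sum_{\pi \in A_n} \alpha^{C(\pi)}
  = \tfrac12 \sum_{\pi \in S_n} \alpha^{C(\pi)} + \tfrac12 \sum_{\pi \in S_n} (-1)^{n - C(\pi)} \alpha^{C(\pi)}.
\]
The first sum is $\alpha^{(n)}$ by the standard identity. The second sum equals $(-1)^n \sum_{\pi \in S_n} (-\alpha)^{C(\pi)} = (-1)^n (-\alpha)^{(n)}$, again by the same identity applied with $\alpha$ replaced by $-\alpha$. A quick inspection shows
\[
  (-\alpha)^{(n)} = (-\alpha)(-\alpha+1)\cdots(-\alpha+n-1) = (-1)^n \alpha(\alpha-1)\cdots(\alpha-n+1) = (-1)^n \alpha_{(n)},
\]
so the second sum simplifies to $\alpha_{(n)}$. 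Dividing by $\alpha^{(n)}$ yields the claimed formula.

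There is essentially no obstacle: the only thing to check carefully is the sign bookkeeping in $(-\alpha)^{(n)} = (-1)^n \alpha_{(n)}$, which lines up perfectly with the $(-1)^n$ coming from $\op{sgn}(\pi) = (-1)^{n - C(\pi)}$. The proof also specializes nicely: at $\alpha = 1$ we get $P_1(A_n) = 1/2$ (since $\alpha_{(n)}$ vanishes), and at $\alpha = 0$ we recover the fact that $n$-cycles are even iff $n$ is odd.
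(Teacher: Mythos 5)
Your proof is correct, and it reaches the key identity by a genuinely different route than the paper. Both arguments start from the same decomposition $\mathbb{1}[\pi \in A_n] = \tfrac12(1+\op{sgn}(\pi))$, and both reduce to the claim $\sum_{\pi \in S_n} \op{sgn}(\pi)\,\alpha^{C(\pi)} = \alpha_{(n)}$. The paper proves this claim from scratch by an exponential generating function computation: it shows that $\sum_n a_n X^n/n!$ factors over cycle lengths into $\exp\bigl(\sum_{i\geq 1} (-1)^{i-1}\alpha X^i/i\bigr) = (1+X)^\alpha$, whence $a_n = \alpha_{(n)}$. You instead observe that $\op{sgn}(\pi) = (-1)^{n-C(\pi)}$ turns the signed sum into $(-1)^n\sum_\pi(-\alpha)^{C(\pi)}$, and then substitute $x=-\alpha$ into the identity $\sum_{\pi\in S_n} x^{C(\pi)} = x^{(n)}$, with the sign bookkeeping $(-\alpha)^{(n)} = (-1)^n\alpha_{(n)}$ closing the loop. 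This is shorter and cleverly recycles the normalization identity the paper already relies on; the one point you should make explicit is that this substitution is legitimate because $\sum_\pi x^{C(\pi)} = x^{(n)}$ is a polynomial identity in $x$ (both sides are degree-$n$ polynomials agreeing at all $x\geq 0$, or one can cite the standard Stirling-number identity), so it may be evaluated at the negative argument $-\alpha$. The paper's EGF route costs a little more work but is self-contained and, with the signs removed, would also prove the normalization $\sum_\pi \alpha^{C(\pi)} = \alpha^{(n)}$ itself, which your argument takes as given. Your closing sanity checks at $\alpha=1$ and $\alpha=0$ are a nice touch and both check out.
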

\begin{proof}
We claim that
\[
  \sum_{\pi \in S_n} \op{sgn}(\pi) \alpha^{C(\pi)} = \alpha_{(n)}.
\]
Let $a_n$ denote the sum above, and let
\[
  f(X) = \sum_{n=0}^\infty \frac{a_n}{n!} X^n.
\]
Then
\begin{align*}
  f(X)
  &= \sum_{n=0}^\infty \sum_{\substack{c_1, \dots, c_n \geq 0 \\ \sum i c_i = n}} \frac{(-1)^{c_2 + c_4 + \cdots} \alpha^{c_1 + c_2 + \cdots}X^n}{\prod_{i=1}^n i^{c_i} c_i!} \\
  &= \prod_{i=1}^\infty \sum_{c=0}^\infty \frac{(-1)^{(i-1)c} \alpha^c X^{ic}}{i^c c!} \\
  &= \exp\l( \sum_{i=1}^\infty \frac{(-1)^{i-1} \alpha X^i}{i} \r) \\
  &= (1+X)^{\alpha} \\
  &= \sum_{n=0}^\infty \binom{\alpha}{n} X^n.
\end{align*}
This proves the claim.
\end{proof}

It's probably unreasonable to hope for many more such nice formulae for $\alpha$-densities of standard subgroups of $S_n$. The case of $S_{n^{1/k}} \wr S_k$ with its product action, for instance, appears to be much more complicated.

\bibliography{ewens}
\bibliographystyle{alpha}
\end{document}